\def\openC{{\rm C\kern-.18cm\vrule width.8pt height 7pt depth-.2pt \kern.18cm}}
\def\openN{{{\rm I}\kern-.16em {\rm N}}}
\def\openR{{{\rm I}\kern-.16em {\rm R}}}
\def\openT{{{\rm T}\kern-.42em {\rm T}}}
\def\openZ{{{\rm Z}\kern-.28em{\rm Z}}}
\newtheorem{thm}{Theorem}[section]
\newtheorem{cor}[thm]{Corollary}
\theoremstyle{definition}
\newtheorem{rem}[thm]{Remark}
\newtheorem{ex}[thm]{Example}
\begin{document}

\title{
{\textbf{{Matrix valued positive definite kernels related to the generalized Aitken's integral for Gaussians}}} \vspace{-4pt}
}

\author{\sc
V. A. Menegatto \& C. P. Oliveira }
\date{}
\maketitle \vspace{-30pt}
\bigskip

\begin{center}
\parbox{13 cm}{{\small We introduce a method to construct general multivariate positive definite kernels on a nonempty set $X$ that employs a prescribed bounded completely monotone function
and special multivariate functions on $X$.\ The method is consistent with a generalized version of Aitken's integral formula for Gaussians.\
In the case where $X$ is a cartesian product, the method produces nonseparable positive definite kernels that may be useful in multivariate interpolation.\ In addition, it can be interpreted as an abstract multivariate generalization of the well-established Gneiting's model for constructing space-time covariances commonly cited in the literature.\ Many parametric models discussed in statistics can be interpreted as particular cases of the method.}}
\end{center}

\noindent{\bf Keywords:} multivariate positive definite kernels; conditionally negative definite functions; Aitken's formula; Schur exponential; Oppenheim's inequality; Gneiting's model.\\
\noindent{2010 MSC:} 42A82, 47A56
\thispagestyle{empty}

\section{Introduction}
\label{sec1}
Let $X$ be a nonempty set and write $M_q(\mathbb{C})$ to denote the set of all $q\times q$ matrices with complex entries.\ A kernel $K=[K_{m,n}]_{m,n=1}^q: X \times X \to M_q(\mathbb{C})$ is {\em positive definite} if  for every positive integer $N$ at most the cardinality of $X$ and distinct points $x_1, \ldots, x_N$ in $X$, the block matrix $[[K_{m,n}(x_\mu,x_\nu)]_{\mu,\nu=1}^N]_{m,n=1}^q$ of order $Nq$ is positive semi-definite, that is,
\begin{equation}\label{PDBAS}
\sum_{\mu,\nu=1}^N c_\mu^{*} K(x_\mu,x_\nu)c_\nu =\sum_{m,n=1}^q \sum_{\mu,\nu=1}^N \overline{c_\mu^m} c_\nu^n K_{m,n}(x_\mu,x_\nu)\geq 0,
\end{equation}
whenever $c_1, \ldots, c_N$ are column vectors in $\mathbb{C}^q$ and $c_\mu=[c_\mu^1 \,\ldots \,c_\mu^q]^\intercal$.\ The star notation refers to conjugate transposition of column vectors in $\mathbb{C}^q$.\ If the matrices
$[[K_{m,n}(x_\mu,x_\nu)]_{\mu,\nu=1}^N]_{m,n=1}^q$ are all
positive definite, that is, the inequalities in (\ref{PDBAS}) are strict when at least one of the vectors
$c_\mu$ is nonzero, then the positive definite kernel $K$ is termed {\em strictly positive definite} on $X$.\ The two classes of kernels introduced above will be denoted by
$PD_q(X)$ and $SPD_q(X)$, respectively.\ Kernels in these classes correspond to the standard positive definite kernels studied in \cite{berg} when we set $q=1$ and identify $M_q(\mathbb{C})$ with $\mathbb{C}$.\ The importance of matrix valued positive definite kernels in their various formats may be ratified in the references \cite{alfonsi,micheli, minh, wittwar}.
Examples of kernels in $PD_q(X)$ and $SPD_q(X)$ can be easily constructed.\ If $A$ is a positive semi-definite matrix in $M_q(\mathbb{C})$, then the constant kernel
$$
K(x,x')=A, \quad x,x'\in X,
$$
belongs to $PD_q(X)$.\ If $f_1, \ldots, f_q$ are kernels in $PD_1(X)$, then the kernel $K$ given by the formula
$$
K(x,x')=\mbox{Diag}(f_1(x,x'),\ldots, f_q(x,x')), \quad x,x' \in X,
$$
belongs to $PD_q(X)$.\ Further, if all the $f_m$ belong to $SPD_1(X)$, then $K$ belongs to $SPD_q(X)$.\ Moving the other way around, if $K$ is a kernel in $PD_q(X)$ and $c\in \mathbb{C}^q$, then $f(x,x') = c^* K(x,x')c$, $x,x'\in X$, defines a function in $PD_1(X)$.\ If $c\neq 0$ and $K$ belongs to $SPD_q(X)$, then $f$ actually belongs to $SPD_1(X)$.

The purpose of this paper is to introduce methods to construct abstract matrix-valued mappings with the additional requirement of positive definiteness and strict positive definiteness.\ In many cases, the method yields flexible models, once it encompasses models found in geophysical sciences, including probabilistic weather forecasting, data assimilation, statistical analysis of climate model output, etc, when one makes the right choice for $X$ and set a metric structure in it.

The method itself will be based on bounded completely monotone functions and special matrix valued functions attached to the notion of conditional negative definiteness.\ Recall that the {\em complete monotonicity} of a function $f:(0,\infty) \to \mathbb{R}$ is characterized by two properties: $f$ is $C^{\infty}$ and $(-1)^nf^{(n)}(t)\geq 0$ for $n=0,1,\ldots$ and $t\in (0,\infty)$.\ Throughout the paper, we will not distinguish between a bounded completely monotone function and its unique continuous extension to $[0,\infty)$.\ A kernel $K=[K_{m,n}]_{m,n=1}^q: X \times X \to M_q(\mathbb{C})$  is {\em conditionally negative definite} if it is Hermitian and the matrices $[[K_{m,n}(x_\mu,x_\nu)]_{\mu,\nu=1}^N]_{m,n=1}^q$ are of {\em negative type}, that is, the quadratic forms (\ref{PDBAS}) are nonpositive whenever the vectors $c_\mu$ satisfy $\sum_{\mu=1}^N c_\mu =0$.\ The conditionally negative definite kernel $K$ is {\em strictly conditionally negative definite} if the matrices $[[K_{m,n}(x_\mu,x_\nu)]_{\mu,\nu=1}^N]_{m,n=1}^q$ are of strict negative type for $N\geq 2$, that is, the quadratic forms are negative whenever $N\geq 2$ and at least one $c_\mu$ is nonzero.\ These two classes of kernels will be denoted by $CND_q(X)$ and $SCND_q(X)$, respectively.\ Examples of kernels in $CND_1(X)$ and $SCND_1(X)$ can be found in \cite{berg} while connections between the classes $PD_1(X)$ and $CND_1(X)$ are described in \cite{belton,berg,kapil}.\ As for examples in the classes $CND_q(X)$ and $SCND_q(X)$ one may imitate the procedures adopted for producing kernels in $PD_q(X)$ and $SPD_q(X)$ previously mentioned.

All the major results we intend to prove here will be based on a generalization of Aitken's integral formula for computing Gaussians: if $A$ is a positive definite matrix in $M_q(\mathbb{R})$ and $b$ is a vector in $\mathbb{R}^q$, then
$$\int_{\mathbb{R}^q} e^{\displaystyle{-u^\intercal  A u+i\,b^\intercal u}}du = \frac{\pi^{q/2}}{\sqrt{\det\,A }} e^{\displaystyle{-b^\intercal  (4A)^{-1} b}}.
$$
Aitken's integral itself corresponds to the formula above in the case $b=0$.\ A proof for the generalized Aitken's integral formula can be reached by mimicking the proof of Aitken's integral in \cite[p. 340]{searle} but an  independent proof is available in \cite{meneoli}.\ This reference also contains univariate versions of some of the results to be described here.

Before we proceed to the outline of the paper, it is worth mentioning that if $X$ is actually a cartesian product of sets, the method to be presented here lead to nonseparable kernels, a desirable property in applications.\ Meanwhile, in some specific cases, the method will become a generalization of the well established Gneiting's contribution in \cite{gneiting} on the construction of kernels in $PD_1(\mathbb{R}^q\times \mathbb{R}^d)$.\ Gneiting's classical result is as follows: for a bounded completely monotone function $\phi: (0,\infty)\to\mathbb{R}$ and a positive valued function $f$ with a completely monotone derivative, it asserts that the formula
\begin{equation}\label{mother}
G_r((x,y),(x',y'))=\frac{1}{f(\|y-y'\|^2)^{r}}\phi\left(\frac{\|x-x'\|^2}{f(\|y-y'\|^2)}\right),\quad x,x'\in \mathbb{R}^q;\,y,y'\in \mathbb{R}^{d},
\end{equation}
defines a kernel $G_r$ in $PD_1(\mathbb{R}^q \times \mathbb{R}^d)$, whenever $r\geq d/2$ and $\|\cdot\|$ denotes the usual norms in both $\mathbb{R}^q$ and $\mathbb{R}^d$.\ The boundedness of $\phi$ is required in order to make $\phi(0^+)<\infty$.\ The references \cite{mene0,porcu01} include some extensions and generalizations of this important result along with additional references on the topic.

The paper proceeds as follows.\ Section 2 begins with the description of two additional notions to be employed in the paper, one for families of vector functions and another for families of matrix functions, along with examples.\ The first major result of the paper is Theorem \ref{p-detem}: it describes a  method to construct kernels in $PD_p(Y)$ from bounded completely monotone functions, special families of vector functions on $Y$ and special families of matrix functions on $Y$.\ Further, it provides a sufficient condition in order that the resulting kernel be in $SPD_p(Y)$.\ At the end of the section we discuss some examples and detach a relevant consequence of Theorem \ref{p-detem}.\ The main result in Section 3 expands Theorem \ref{p-detem} via integration with respect to a convenient measure.\ We separate a special simpler version of the theorem in Corollary \ref{conse}.\ Section 4 describes extensions of Theorems \ref{p-detem} and \ref{p-detemen} that lead to kernels in $PD_p(X\times Y)$.\ Applications and a multivariate abstract extension of the classical Gneiting's result are described.

\section{The main result for positive definiteness on a single set}
\label{sec2}
This section contains the first main contribution in the paper to be made explicit in Theorem \ref{p-detem}.\ It provides a method to construct functions in $PD_q(Y)$ using completely monotonic functions via Aitken's formula.\ A sufficient condition for strict positive definiteness is included.\ The contribution itself demands two notions for families of functions with domain $Y$  which we now discuss.

For a matrix function $G$ in $CND_q(Y)$ and a vector $u$ from $\mathbb{C}^q$, the kernel
$$
(y,y') \in Y\times Y \mapsto u^* G(y,y') u
$$
belongs to $CND_1(Y)$.\ Further, the kernel belongs to $SCND_1(Y)$ whenever $G$ belongs to $SCND_q(Y)$ and $u$ is nonzero.\
Theorem \ref{p-detem} will demand a family $\{G_{m,n}: m,n=1,\ldots,p\}$ for which all the matrix kernels
$$
(y,y')\in Y \times Y\mapsto \left[u^{\intercal} G_{m,n}(y,y') u\right]_{m,n=1}^p, \quad u\in \mathbb{R}^q,
$$
belong to $CND_p(Y)$.\ Since this is not easily achievable, the following example is apposite.

\begin{ex} \label{exa0} Define
		$$
		G_{m,n}(y,y')= g_m(y) + g_n(y'), \quad y,y'\in X,
		$$
		where the $g_m : Y \to M_q(\mathbb{C})$ are functions subject to our choice.\ If $y_1,\ldots, y_N$ are distinct points in $Y$, $c_1,\ldots, c_N$ are vectors in $\mathbb{C}^p$ such that
		$\sum_{\mu=1}^Nc_\mu=0$, and $u\in \mathbb{C}^q$, then
		\begin{eqnarray*}
			\sum_{\mu,\nu=1}^N c_\mu^* \left[u^{\intercal} G_{m,n}(y_\mu,y_\nu)u\right]_{m,n=1}^p c_\nu & = & \sum_{n=1}^p \sum_{\nu=1}^N  c_\nu^n \sum_{\mu=1}^N \sum_{m=1}^p \overline{c_\mu^m} u^{\intercal} g_m(y_\mu) u\\
			& & \hspace*{3mm}+\sum_{m=1}^p \sum_{\mu=1}^N  \overline{c_\mu^m} \sum_{\nu=1}^N \sum_{n=1}^p c_\nu^n u^{\intercal} g_n(y_\nu) u =0,
		\end{eqnarray*}
		that is, the matrix function
		$$
		(y,y')\in X \times X\mapsto \left[u^{\intercal} G_{m,n}(y,y') u\right]_{m,n=1}^p,
		$$
		belongs to $CND_p(Y)$.
\end{ex}

\begin{ex}  Set  $G_{m,n}=0$ when $m\neq n$ and pick each $G_{m,m}$ in the class $CND_q(Y)$.\ Keeping the $c_\mu$ and the $y_\mu$ as in Example \ref{exa0}, it is easily seen that
		$$
		\sum_{\mu,\nu=1}^N c_\mu^* \left[u^{\intercal} G_{m,n}(y_\mu,y_\nu)u\right]_{m,n=1}^p c_\nu=\sum_{m=1}^p \sum_{\mu,\nu=1}^N \overline{c_\mu^m}c_\nu^m u^{\intercal} G_{m,m}(y_\mu,y_\nu)u\leq 0.
		$$
Thus, the matrix function
		$$
		(y,y')\in X \times X\mapsto \left[u^{\intercal} G_{m,n}(y,y') u\right]_{m,n=1}^p,
		$$
		belongs to $CND_p(Y)$.
\end{ex}

Theorem \ref{p-detem} will also need special families $\{H_{m,n}: m,n=1,\ldots,p\}$ of vector functions $H_{m,n}: Y\times Y \to \mathbb{C}^q$.\ As matter of fact, we will require families
for which all the matrix functions
$$
(y,y')\in Y\times Y \mapsto \left[e^{\displaystyle{i\,H_{m,n}(y,y')^* u}}\right]_{m,n=1}^p,\quad u\in \mathbb{R}^q,
$$
belong to $PD_p(Y)$.\ Again, this is not easy to achieve, reason why a simple example is handy.

\begin{ex}{\rm Let us set
		$$
		H_{m,n}(y,y')=h_m(y)-h_n(y'),\quad y,y' \in Y,
		$$
		where $h_m:Y \to \mathbb{R}^q$, $m=1,\ldots, p$.\ If $y_1, \ldots, y_N$ are distinct points in $Y$ and $c_1, \ldots, c_N$ are vectors in $\mathbb{C}^p$, then
		$$
		\sum_{\mu,\nu=1}^N c_\mu^* \left[e^{\displaystyle{i\,H_{m,n}(y_\mu, y_\nu)^\intercal  u}}\right]_{m,n=1}^p c_\nu = \left|\sum_{\mu=1}^N \sum_{m=1}^p \overline{c_\mu^m}\, e^{\displaystyle{i\,h_m(y_\mu)^\intercal u}}\right|^2 \geq 0, \quad u\in \mathbb{R}^q,
		$$
		that is, the kernels
		$$
		(y,y')\in Y\times Y \mapsto \left[e^{\displaystyle{i\,H_{m,n}(y,y')^* u}}\right]_{m,n=1}^p,\quad u\in \mathbb{R}^q,
		$$
		belong to $PD_p(Y)$.}
\end{ex}

We observe that if the matrix functions
$$
(y,y')\in Y\times Y \mapsto \left[e^{\displaystyle{i\,H_{m,n}(y,y')^* u}}\right]_{m,n=1}^p,\quad u\in \mathbb{R}^q,
$$
belong to $PD_p(Y)$, then each $H_{m,n}$ must be anti-symmetric in the sense that
$$
\mbox{Re\,}H_{m,n}(y,y')=-\mbox{Re\,}H_{m,n}(y',y), \quad y,y' \in Y.
$$
In particular,
$$
\mbox{Re\,}H_{m,n}(y,y)=0, \quad m,n=1,\ldots,p;\, y\in Y.
$$

Some specific properties of Hadamard exponentials will be needed.\ We recall that if $A$ is a matrix in $M_q(\mathbb{C})$, then its {\em Hadamard exponential} is the
matrix
$$
e^{\circ A}:=[e^{A_{\mu\nu}}]_{\mu,\nu=1}^q.
$$
Let $M_q(\mathbb{R})$ denote subset of $M_q(\mathbb{C})$ composed of real matrices only.\ If $A \in M_q(\mathbb{R})$ is symmetric and of negative type, then the Hadamard exponential of $-A$ is positive semi-definite.\ It is positive definite if, and only if,
$$
A_{\mu\mu}+A_{\nu\nu} < 2A_{\mu\nu}, \quad \mu\neq \nu.
$$
These facts are proved in Lemma 2.5 in \cite{reams} albeit \cite{mene} analyzed similar properties earlier.\ As an obvious consequence, we have that if $A \in M_q(\mathbb{R})$ is of strict negative type, then the Hadamard exponential of $-A$ is positive definite.\
Finally, if a real symmetric block matrix $A=[[A_{mn}(\mu\nu)]_{\mu,\nu=1}^N]_{m,n=1}^q$ is of negative type, then the Hadamard exponential of $-A$ is positive definite if, and only if,
\begin{equation}\label{blockexp}
A_{mm}(\mu\mu)+A_{nn}(\nu\nu)<2A_{mn}(\mu\nu), \quad |m-n|+|\mu-\nu|>0.
\end{equation}

Below, we will use the symbol $\bullet$ to denote the Schur product of two matrices of same size.

\begin{thm} \label{p-detem}
	Let $\phi$ be a bounded and completely monotone function.\
	For each $m,n$ in $\{1,\ldots, p\}$, let $G_{m,n}:Y \times Y \to M_q(\mathbb{R})$ be a matrix function with range containing positive definite matrices only and $H_{m,n}: Y \times Y \to \mathbb{R}^q$ a vector function.\ Assume the matrix functions
	$$
	(y,y')\in Y \times Y \mapsto [u^{\intercal} G_{m,n}(y,y')u]_{m,n=1}^p,\quad u \in \mathbb{R}^q,
	$$
	belong to $CND_{p}(Y)$ and that
	$$
	(y,y')\in Y\times Y \mapsto \left[e^{\displaystyle{i\,H_{m,n}(y,y')^{\intercal} u}}\right]_{m,n=1}^p,\quad u\in \mathbb{R}^q,
	$$
	belong to $PD_p(Y)$.\ The following assertions hold for the  kernel $K: Y\times Y \to M_p(\mathbb{R})$
	given by the formula
	$$
	K(y,y')=\left[\frac{\phi\left(H_{m,n}(y,y')^\intercal  G_{m,n}(y,y')^{-1} H_{m,n}(y,y')\right)}{\sqrt{\det G_{m,n}(y,y')}} \right]_{m,n=1}^p, \quad y,y' \in Y.
	$$
	\begin{itemize}
		\item[$(i)$] $K$ belongs to $PD_p(Y)$.
		\item[$(ii)$] If $\phi$ is not identically 0 and there exists an open subset $U$ of $\mathbb{R}^q\setminus\{0\}$ so that
		$$
		u^\intercal [G_{m,m}(y,y)+G_{n,n}(y',y')-2G_{m,n}(y,y')]u<0,\quad (m,y)\neq (n,y');\, u\in U,
		$$ then $K$ belongs to $SPD_p(Y)$.
	\end{itemize}
\end{thm}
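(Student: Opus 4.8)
The plan is to linearize $\phi$ through its Bernstein representation and then read each entry of $K$ as a continuous superposition of Schur products of kernels that are already known to be positive definite. Since $\phi$ is bounded and completely monotone, the Hausdorff--Bernstein--Widder theorem supplies a finite nonnegative Borel measure $\mu$ on $[0,\infty)$ with $\phi(t)=\int_{[0,\infty)}e^{-ts}\,d\mu(s)$, the finiteness coming from $\phi(0^+)=\mu([0,\infty))<\infty$. Substituting $t=H_{m,n}(y,y')^{\intercal} G_{m,n}(y,y')^{-1}H_{m,n}(y,y')$ yields
$$
K_{m,n}(y,y')=\int_{[0,\infty)}\frac{e^{\displaystyle -s\,H_{m,n}(y,y')^{\intercal} G_{m,n}(y,y')^{-1}H_{m,n}(y,y')}}{\sqrt{\det G_{m,n}(y,y')}}\,d\mu(s).
$$
Next I would apply the generalized Aitken formula with $A=(4s)^{-1}G_{m,n}(y,y')$ and $b=H_{m,n}(y,y')$ (for $s>0$) to rewrite the integrand as $\pi^{-q/2}(4s)^{-q/2}\int_{\mathbb{R}^q}e^{-\frac{1}{4s}u^{\intercal}G_{m,n}u+i\,H_{m,n}^{\intercal}u}\,du$, the contribution of a possible atom of $\mu$ at $s=0$ being treated by the same formula with $b=0$.

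The core observation is that, evaluated at distinct points $y_1,\dots,y_N$, the $Np\times Np$ matrix produced by the inner integrand factors as the Schur product
$$
\Big[\big[e^{-\frac{1}{4s}u^{\intercal} G_{m,n}(y_\mu,y_\nu)u}\big]_{\mu,\nu}\Big]_{m,n}\ \bullet\ \Big[\big[e^{\,i\,H_{m,n}(y_\mu,y_\nu)^{\intercal} u}\big]_{\mu,\nu}\Big]_{m,n}.
$$
The first factor is the Hadamard exponential of $-(4s)^{-1}$ times the block matrix $[[u^{\intercal} G_{m,n}(y_\mu,y_\nu)u]_{\mu,\nu}]_{m,n}$, which is of negative type by the $CND_p(Y)$ hypothesis; scaling by $(4s)^{-1}>0$ preserves negative type, so this factor is positive semi-definite by the Reams lemma recalled before~(\ref{blockexp}). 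The second factor is positive semi-definite by the $PD_p(Y)$ hypothesis. By the Schur product theorem their Schur product is positive semi-definite; integrating the associated quadratic form against $du$ and then against the nonnegative $d\mu(s)$ (Fubini applies because the modulus of the inner $u$-integral is dominated by $\pi^{q/2}(4s)^{q/2}/\sqrt{\det G_{m,n}}$, which cancels the singular weight) preserves nonnegativity and yields $(i)$.

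For $(ii)$ I would upgrade ``positive semi-definite'' to ``positive definite'' in the first factor. Multiplying by $(4s)^{-1}$, the hypothesis $u^{\intercal}[G_{m,m}(y,y)+G_{n,n}(y',y')-2G_{m,n}(y,y')]u<0$ for $(m,y)\neq(n,y')$, $u\in U$, is precisely the strict inequality~(\ref{blockexp}) for the block matrix $[[u^{\intercal} G_{m,n}(y_\mu,y_\nu)u]]$ at the distinct points $y_\mu$; hence the first factor is positive definite for every $u\in U$. Since $\mathrm{Re}\,H_{m,m}(y,y)=0$ forces the diagonal entries $e^{iH_{m,m}(y_\mu,y_\mu)^{\intercal}u}$ of the second (unit-diagonal) factor to equal $1$, Oppenheim's inequality, with that factor as the Hadamard multiplier, gives $\det(\text{Schur product})\geq\det(\text{first factor})>0$; being also positive semi-definite, the Schur product is positive definite for $u\in U$. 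Consequently the inner quadratic form is strictly positive on $U$ for every relevant $s$ whenever some $c_\mu\neq0$. Because $U$ is a nonempty open set it has positive Lebesgue measure, the integrand is continuous in $u$, and $\phi\not\equiv0$ makes $\mu$ a nonzero measure; integrating over $U$ and then over a $\mu$-nonnull set gives a strictly positive total, establishing $SPD_p(Y)$.

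The main obstacle I anticipate is the bookkeeping in $(ii)$: verifying that the abstract open-set hypothesis translates exactly into the block criterion~(\ref{blockexp}) under the correct identification of the composite index $(m,\mu)\leftrightarrow(m,y_\mu)$, and then arguing that strict positivity of the integrand on the single open set $U$ survives integration against the possibly atomic Bernstein measure. The positive-definiteness part $(i)$ is, by contrast, essentially the Aitken identity followed by the Schur product theorem.
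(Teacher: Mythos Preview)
Your proposal is correct and follows the same route as the paper: Bernstein--Widder for $\phi$, the generalized Aitken identity to exhibit each entry as an integral of a Schur product, the Hadamard-exponential fact for the $G$ factor, and Oppenheim's inequality with the unit-diagonal $H$ factor for $(ii)$. The only difference is cosmetic---the paper applies Aitken with $A=G_{m,n}$ and $b=2\sqrt{s}\,H_{m,n}$ (your choice after the substitution $u\mapsto 2\sqrt{s}\,u$), which makes the representation valid uniformly at $s=0$ and spares you both the $(4s)^{-q/2}$ weight and the separate treatment of an atom of $\mu$ at the origin.
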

\begin{proof} We begin proving Assertion $(i)$ in the case where $\phi$ is a constant function, that is, the case in which
	$$
	K(y,y')=\left[\frac{\phi(0)}{\sqrt{\det G_{m,n}(y,y')}}\right]_{m,n=1}^p, \quad y,y' \in Y.
	$$
	Since each matrix $G_{m,n}(y,y')$ is positive definite, we may apply Aitken's integral formula to obtain
	\begin{equation} \label{key1}
	K(y,y')=\frac{\phi(0)}{\pi^{q/2}}\left[\int_{\mathbb{R}^q} e^{\displaystyle{-u^{\intercal} G_{m,n}(y,y')u}}du \right]_{m,n=1}^p, \quad y,y' \in Y.
	\end{equation}
	If $y_1, \ldots, y_N$ are distinct points in $Y$ and $c_1, \ldots, c_N$ are vectors in $\mathbb{R}^p$, then
	\begin{eqnarray*}
		\sum_{\mu,\nu=1}^N c_\mu^{\intercal}K(y_\mu,y_\nu)c_\nu & = & \frac{\phi(0)}{\pi^{q/2}}\sum_{\mu,\nu=1}^N \sum_{m,n=1}^p c_\mu^mc_\nu^n \int_{\mathbb{R}^q} e^{\displaystyle{-u^{\intercal} G_{m,n}(y_\mu,y_\nu)u}}du\\
		& = & \frac{\phi(0)}{\pi^{q/2}}\int_{\mathbb{R}^q} \sum_{\mu,\nu=1}^N c_\mu^{\intercal} E_u(y_\mu,y_\nu)c_\nu du,
	\end{eqnarray*}
	where
	$$
	E_u(y,y')=\left[e^{\displaystyle{-u^{\intercal} G_{m,n}(y,y')u}}\right]_{m,n=1}^p=e^{\displaystyle{\circ [-u^{\intercal} G_{m,n}(y,y')u}]_{m,n=1}^p},\quad u\in \mathbb{R}^q.
	$$
	One of the assumptions on the $G_{m,n}$ now yields that
	$$
	\sum_{\mu,\nu=1}^N c_\mu^{\intercal} E_u(y_\mu,y_\nu)c_\nu \geq 0, \quad u \in \mathbb{R}^q,
	$$
	and Assertion $(i)$ follows in this case.\ In the general case, the Bernstein-Widder Theorem (\cite[p. 3]{schilling}) implies that
	$$
	K(y,y')=\left[\frac{1}{\sqrt{\det G_{m,n}(y,y')}} \int_{[0,\infty)}e^{\displaystyle{-H_{m,n}(y,y')^\intercal  G_{m,n}(y,y')^{-1} H_{m,n}(y,y')\,s}}d\sigma(s)\right]_{m,n=1}^p
	$$
	for some finite and positive measure $\sigma$ on $[0,\infty)$.\ On the other hand, the generalized Aitken's integral formula provides the alternative representation
	$$
	K(y,y')=\left[\frac{1}{\pi^{q/2}}\int_{[0,\infty)}\left(\int_{\mathbb{R}^q} e^{\displaystyle{-u^{\intercal}G_{m,n}(y,y')u}} e^{\displaystyle{2i\, \sqrt{s} H_{m,n}(y,y')^\intercal u}}du\right) d\sigma(s)\right]_{m,n=1}^p.
	$$
	If the $y_\mu$ are as before and the $c_\mu$ are now complex vectors, the quadratic form
	$$
	Q:=\sum_{\mu,\nu=1}^N c_\mu^{*}K(y_\mu,y_\nu)c_\nu
	$$
	becomes
	\begin{eqnarray*}
		Q & = & \frac{1}{\pi^{q/2}} \sum_{\mu,\nu=1}^N \sum_{m,n=1}^p \overline{c_\mu^m} c_\nu^n \int_{[0,\infty)} \int_{\mathbb{R}^q} e^{-\displaystyle{u^{\intercal} G_{m,n}(y_\mu,y_\nu)u}}e^{\displaystyle{i\,2\sqrt{s} H_{m,n}(y_\mu,y_\nu)^{\intercal} u}} du d\sigma(s)\\
		& = & \frac{1}{\pi^{q/2}}\int_{[0,\infty)}\int_{\mathbb{R}^q} \sum_{\mu,\nu=1}^N c_\mu^{*} \left[E_u(y_\mu,y_\nu) \bullet E_u^s(y_\mu,y_\nu)\right] c_\nu\, du d\sigma(s),
	\end{eqnarray*}
	where
	$$
	E_u^s(y,y')=\left[e^{\displaystyle{i\,2\sqrt{s} H_{m,n}(y,y')^{\intercal} u} }\right]_{m,n=1}^p, \quad y,y' \in Y; s\geq 0.
	$$
	The assumption on the $H_{m,n}$ settles the positive semi-definiteness
	of each matrix $E_u^s(y_\mu,y_\nu)$ while the Schur Product Theorem ratifies the positive semi-definiteness of each Schur product $E_u(y_\mu,y_\nu) \bullet E_u^s(y_\mu,y_\nu)$.\ These
	arguments validate the inequality $Q\geq 0$.\\
	Let us keep the notation used above to prove Assertion $(ii)$.\ Assume further that the $c_\mu$ are not all zero vectors.\ If there exists an open subset $U$ of $\mathbb{R}^q\setminus\{0\}$ so that
	$$
	u^\intercal [G_{m,m}(y,y)+G_{n,n}(y',y')-2G_{m,n}(y,y')]u<0,\quad (m,y)\neq (n,y');\, u\in U,
	$$
	we can infer via (\ref{blockexp}) that the block matrix
	$$
	E_u(y_\mu,y_\nu)=\left[ \left[e^{\displaystyle{-u^\intercal G_{m,n}(y_\mu,y_\nu)u}}\right]_{\mu,\nu=1}^N\right]_{m,n=1}^p
	$$
	is positive definite whenever $u\in U$.\ Thus, if $\phi$ is constant and not identically 0,
	then $Q>0$ by Formula (\ref{key1}).\ If $\phi$ is nonconstant, first we invoke our assumption on the $H_{m,n}$ in order to see that
	the diagonal entries in each block matrix
	$$
	E_u^s(y_\mu,y_\nu)=\left[ \left[e^{\displaystyle{i\,2 \sqrt{s}H_{m,n}(y_\mu,y_\nu)^\intercal u}}\right]_{\mu,\nu=1}^N\right]_{m,n=1}^p
	$$
	are all equal to 1.\ An application of Oppenheim's inequality (\cite[p. 509]{horn}) shows that the Schur product $E_u(y_\mu,y_\nu) \bullet E_u^s(y_\mu,y_\nu)$ is positive definite for $u\in U$ and $s\geq 0$.\ In particular,
	$$
	\int_{\mathbb{R}^q}\sum_{\mu,\nu=1}^N c_\mu^{*} \left[E_u(y_\mu,y_\nu) \bullet E_u^s(y_\mu,y_\nu)\right] c_\nu\, du>0, \quad s\geq 0.
	$$
	Since $\sigma$ is not the zero measure we may go one step further and infer that $Q>0$.
\end{proof}


\begin{rem}
	Theorem 17 in \cite{schlather} is a very special case of Theorem \ref{p-detem}-$(i)$.
\end{rem}

Next, we present some examples that illustrate our findings.

\begin{ex}\label{exe1}  For $m=1,\ldots,p$, let $g_m: Y \to M_q(\mathbb{R})$ be a function with range containing positive definite matrices only and $h_m: Y \to \mathbb{R}^q$ an arbitrary function.\
		Setting $G_{m,n}(y,y')=g_m(y)+g_n(y')$, $y,y'\in Y$, and $H_{m,n}(y,y')=h_m(y)-h_n(y')$, $y,y'\in Y$, the assumptions in Theorem \ref{p-detem} are satisfied.\ Thus, the formula
		$$\left[\frac{\phi\left((h_m(y)-h_n(y'))^\intercal  (g_m(y)+g_n(y'))^{-1} (h_m(y)-h_n(y'))\right)}{\sqrt{\det[(g_m(y)+g_n(y')]}}\right]_{m,n=1}^p, \quad y,y'\in Y,
		$$
		defines a kernel in $PD_p(Y)$ whenever $\phi$ is bounded completely monotone function.\ The inequalities in Theorem \ref{p-detem}-$(ii)$ cannot be matched in this abstract example.
\end{ex}

\begin{ex}\label{exdi} For $m,n=1,\ldots,p$, let us set
$$
G_{m,n}(y,y')=g_{m,n}(y,y')I_q,\quad y,y'\in Y,
$$
where each $g_{m,n}$ is a positive valued kernel on $Y$ and $(y,y')\in Y\times Y \mapsto [g_{m,n}(y,y')]_{m,n=1}^p$ belongs to $CND_p(Y)$.\ Observe that
for each $m$ and $n$,
$$
u^\intercal G_{m,n}(y,y') u=\|u\|^2 g_{m,n}(y,y'),\quad u\in \mathbb{R}^q; y,y'\in Y.
$$
On the other hand, if $c_1, \ldots, c_N$ are column vectors satisfying $\sum_{\mu=1}^n c_\mu=0$ and $y_1, \ldots, y_n$ belong to $Y$, then
$$
\sum_{m,n=1}^p \sum_{\mu,\nu=1}^N \overline{c_\mu^m}c_\nu^n u^\intercal G_{m,n}(y_\mu,y_\nu)u=\|u\|^2 \sum_{m,n=1}^p \sum_{\mu,\nu=1}^N \overline{c_\mu^m}c_\nu^n g_{m,n}(y_\mu,y_\nu)\leq 0.
$$
that is, each kernel
$$
(y,y')\in Y \times Y \mapsto [u^\intercal G_{m,n}(y,y')u ]_{m,n=1}^p,
$$
belongs to $CND_p(Y)$.\ If the $H_{m,n}$ satisfy the assumptions of Theorem \ref{p-detem}, then it is promptly seen that the formula
$$
K(y,y')=\left[\frac{1}{ g_{m,n}(y,y')^{q/2}} \phi\left( \frac{\|H_{m,n}(y,y')\|^2}{g_{m,n}(y,y')}\right)\right]_{m,n=1}^p, \quad y,y' \in Y,
$$
defines a matrix kernel in $PD_p(Y)$ whenever $\phi$ is a bounded completely monotone function.
\end{ex}

\begin{ex}{\label{exdi2}\rm If we take $H_{m,n}$ as in Example \ref{exe1}, then the kernel $K$ in Example \ref{exdi} takes the form
		$$
		K(y,y')=\left[\frac{1}{g_{m,n}(y,y')^{q/2}} \phi\left( \frac{\|h_m(y)-h_n(y')\|^2}{g_{m,n}(y,y')}\right)\right]_{m,n=1}^p, \quad y,y' \in Y.
		$$
		This example has an structure that resembles that of Gneiting's model in \cite{gneiting} for the construction of space-time
		covariances.\ We can get even closer by setting $g_{m,n}:=g$ for all $m$ and $n$, where $g: Y \to (0,\infty)$ belongs to $CND_1(Y)$, a choice that leads to
		$$
		K(y,y')=\frac{1}{g(y,y')^{q/2}}\left[ \phi\left( \frac{\|h_m(y)-h_n(y')\|^2}{g(y,y')}\right)\right]_{m,n=1}^p, \quad y,y' \in Y.
		$$}
\end{ex}

The setting adopted in both Examples \ref{exdi} and \ref{exdi2} is a particular case of that detached in Theorem \ref{coro} below.\
Needless to say that the theorem can be interpreted as a multivariate version of the Gneiting's criterion in \cite{gneiting}.

\begin{thm}\label{coro}
	Let $\phi$ be a bounded and completely monotone function.\ Let $g$ be a positive valued kernel in $CND_1(Y)$ and for each $m,n$ in $\{1,\ldots, p\}$, define
	$$
	G_{m,n}(y,y')= g(y,y')I_q, \quad y,y' \in Y.
	$$
	If $H_{m,n}: Y \times Y \to \mathbb{R}^q$ is a vector function such that the matrix functions
	$$
	(y,y')\in Y\times Y \mapsto \left[e^{\displaystyle{i\,H_{m,n}(y,y')^\intercal u}}\right]_{m,n=1}^p,\quad u\in \mathbb{R}^q,
	$$
	belong to $PD_p(Y)$, then the following assertions hold for the  kernel $K: Y\times Y \to M_p(\mathbb{R})$
	given by the formula
	$$
	K(y,y')=\frac{1}{g(y,y')^{q/2}}\left[ \phi\left( \frac{\|H_{m,n}(y,y')\|^2}{g(y,y')}\right)\right]_{m,n=1}^p, \quad y,y' \in Y.
	$$
	\begin{itemize}
		\item[$(i)$] $K$ belongs to $PD_p(Y)$.
		\item[$(ii)$] If $\phi$ is not identically 0 and $g(y,y)+g(y',y')-2g(y,y')<0$ for $y\neq y'$, then $K$ belongs to $SPD_p(Y)$.
	\end{itemize}
\end{thm}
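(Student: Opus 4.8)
The plan is to recognize Theorem \ref{coro} as the particular instance of Theorem \ref{p-detem} obtained by taking $G_{m,n}(y,y')=g(y,y')I_q$ for every pair $(m,n)$, and then merely to verify that this choice meets the two hypotheses of Theorem \ref{p-detem} and that the general kernel formula collapses to the stated one. Since $g$ is positive valued, each $g(y,y')I_q$ is positive definite, so the range requirement on the $G_{m,n}$ holds at once. Moreover $\det G_{m,n}(y,y')=g(y,y')^q$ and $H_{m,n}(y,y')^\intercal G_{m,n}(y,y')^{-1}H_{m,n}(y,y')=\|H_{m,n}(y,y')\|^2/g(y,y')$, so the kernel of Theorem \ref{p-detem} becomes exactly the kernel $K$ in the statement.

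First I would check the conditional negative definiteness hypothesis. For $u\in\mathbb{R}^q$ one has $u^\intercal G_{m,n}(y,y')u=\|u\|^2 g(y,y')$, independent of $m,n$. Hence, for distinct $y_1,\dots,y_N$ and vectors $c_1,\dots,c_N\in\mathbb{C}^p$ with $\sum_\mu c_\mu=0$, writing $s_\mu:=\sum_{m=1}^p c_\mu^m$, the block quadratic form reduces to
\[
\sum_{\mu,\nu=1}^N c_\mu^*\,[u^\intercal G_{m,n}(y_\mu,y_\nu)u]_{m,n=1}^p\,c_\nu=\|u\|^2\sum_{\mu,\nu=1}^N g(y_\mu,y_\nu)\,\overline{s_\mu}\,s_\nu .
\]
The constraint $\sum_\mu c_\mu=0$ forces $\sum_\mu s_\mu=0$, so membership of $g$ in $CND_1(Y)$ makes the right-hand side nonpositive; thus the matrix functions $(y,y')\mapsto[u^\intercal G_{m,n}(y,y')u]_{m,n=1}^p$ lie in $CND_p(Y)$. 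The second hypothesis, positive definiteness of the exponential kernels built from the $H_{m,n}$, is assumed verbatim in the statement. With both hypotheses in hand, assertion $(i)$ is immediate from Theorem \ref{p-detem}-$(i)$.

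For $(ii)$ I would try to produce the open set $U$ required by Theorem \ref{p-detem}-$(ii)$. Here $G_{m,m}(y,y)+G_{n,n}(y',y')-2G_{m,n}(y,y')=[g(y,y)+g(y',y')-2g(y,y')]I_q$, so $u^\intercal[\,\cdot\,]u=\|u\|^2[g(y,y)+g(y',y')-2g(y,y')]$. When $y\neq y'$ the hypothesis on $g$ makes this strictly negative for every $u\neq 0$, so for such index pairs the choice $U=\mathbb{R}^q\setminus\{0\}$ works. The main obstacle is precisely the remaining index pairs, those with $y=y'$ but $m\neq n$: there the quantity above is identically $0$, so the strict inequality demanded by Theorem \ref{p-detem}-$(ii)$ is not supplied by $G$ alone, and indeed the block Gaussian matrix $E_u$ degenerates (it equals the Kronecker product of the $p\times p$ all-ones matrix with $[e^{-\|u\|^2 g(y_\mu,y_\nu)}]_{\mu,\nu=1}^N$), which is rank-deficient for $p\ge 2$. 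Consequently the Oppenheim step cannot be applied verbatim, and the degeneracy in the block direction must instead be broken by the factor $E_u^s$ coming from the $H_{m,n}$. I therefore expect the heart of the argument to be a direct reexamination of
\[
Q=\frac{1}{\pi^{q/2}}\int_{[0,\infty)}\int_{\mathbb{R}^q}\sum_{\mu,\nu=1}^N c_\mu^*\,[E_u(y_\mu,y_\nu)\bullet E_u^s(y_\mu,y_\nu)]\,c_\nu\,du\,d\sigma(s),
\]
exploiting that each spatial matrix $[e^{-\|u\|^2 g(y_\mu,y_\nu)}]_{\mu,\nu=1}^N$ is genuinely positive definite for $u\neq 0$ (by the cited Hadamard-exponential lemma and the strict hypothesis on $g$), together with the positive semidefiniteness and unit diagonal of $E_u^s$, to force the double integral to be strictly positive whenever $\sigma\neq 0$ and the $c_\mu$ are not all zero. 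This is the delicate point on which the strict positive definiteness really hinges.
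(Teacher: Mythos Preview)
The paper gives no proof of Theorem \ref{coro}; it is presented immediately after Examples \ref{exdi} and \ref{exdi2} as the specialization of Theorem \ref{p-detem} obtained via $G_{m,n}=g\,I_q$, which is exactly the reduction you carry out. Your verification of the $CND_p(Y)$ hypothesis and of part $(i)$ is correct and matches the paper's intended (but unwritten) argument.

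For part $(ii)$ you correctly observe that the hypothesis of Theorem \ref{p-detem}-$(ii)$ is \emph{not} met: on the block-diagonal pairs $y=y'$, $m\neq n$ the quantity $u^\intercal[G_{m,m}(y,y)+G_{n,n}(y,y)-2G_{m,n}(y,y)]u$ vanishes identically, so no open $U$ exists. Your proposed remedy, namely to salvage strictness through the factor $E_u^s$, cannot close this gap under the hypotheses as stated. Take $\phi\equiv 1$ (bounded, completely monotone, not identically $0$) and $H_{m,n}\equiv 0$ (so that the exponential kernels are constantly the all-ones matrix $J_p$, which lies in $PD_p(Y)$). Then
\[
K(y,y')=g(y,y')^{-q/2}\,J_p,
\]
and for $p\ge 2$ the block matrix $[K(y_\mu,y_\nu)]_{\mu,\nu=1}^N=[g(y_\mu,y_\nu)^{-q/2}]_{\mu,\nu=1}^N\otimes J_p$ has rank at most $N<Np$; already with $N=1$ and $c_1=(1,-1,0,\dots,0)^\intercal$ one gets $c_1^*K(y_1,y_1)c_1=0$. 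Hence $K\notin SPD_p(Y)$. In this example $E_u^s\equiv J_p$ as well, so no appeal to the $H$-factor can help. Thus assertion $(ii)$, as formulated, does not follow from Theorem \ref{p-detem}-$(ii)$ and in fact fails for $p\ge 2$ without an additional nondegeneracy hypothesis on the $H_{m,n}$ (or a restriction to $p=1$, where your reduction to Theorem \ref{p-detem}-$(ii)$ goes through verbatim). The paper's omission of a proof leaves this point unaddressed.
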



\section{An extension of the main result via integration}

Here, we extend the results proved in Section 2 by introducing a scale mixture in the formula that defines the positive definite kernels.

Our first contribution here is as follows.

\begin{thm} \label{p-detemen}
	Let $\rho$ be a nonzero positive measure on $(0,\infty)$ and $\phi$ a bounded and completely monotone function.\ For each $m,n$ in $\{1,\ldots, p\}$, let $G_{m,n}:Y \times Y \to M_q(\mathbb{R})$ be a matrix function with range containing positive definite matrices only, $H_{m,n}: Y \times Y \to \mathbb{R}^q$ a vector function and $\{P_{m,n}^s\}_{s>0}$ a family of kernels on $Y$ such that each function $s\in (0,\infty) \mapsto P_{m,n}^s(y,y')$ is $\rho$-integrable.\ If the matrix functions
	$$
	(y,y')\in Y \times Y \mapsto [u^{\intercal} G_{m,n}(y,y')u]_{m,n=1}^p,\quad u \in \mathbb{R}^q,
	$$
	$$
	(y,y')\in Y\times Y \mapsto \left[e^{\displaystyle{i\,H_{m,n}(y,y')^\intercal u}}\right]_{m,n=1}^p,\quad u\in \mathbb{R}^q,
	$$
	and
	$$
	(y,y')\in Y\times Y \mapsto [P_{m,n}^s(y,y')]_{m,n=1}^p, \quad s>0,
	$$
	belong to $CND_{p}(Y)$, $PD_p(Y)$, and $PD_p(Y)$, respectively, then the  kernel $K: Y\times Y \to M_p(\mathbb{R})$
	given by the formula
	\begin{eqnarray*}
		K(y,y')& = & \left[\frac{1}{\sqrt{\det G_{m,n}(y,y')}} \right. \\
		& \times  & \left. \int_{(0,\infty)} \phi\left(H_{m,n}(y,y')^\intercal  G_{m,n}(y,y')^{-1} H_{m,n}(y,y')\, s\right) P_{m,n}^s(y,y')d\rho(s)\right]_{m,n=1}^p
	\end{eqnarray*}
	belongs to $PD_p(Y)$.
\end{thm}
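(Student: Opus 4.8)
The plan is to reduce the statement to Theorem \ref{p-detem} by freezing the integration variable $s$, and then to recover the full kernel through a Schur product followed by an integration in $s$. First I would fix $s>0$ and introduce the rescaled vector functions $\sqrt{s}\,H_{m,n}$. Since
$$
(\sqrt{s}\,H_{m,n}(y,y'))^\intercal G_{m,n}(y,y')^{-1}(\sqrt{s}\,H_{m,n}(y,y'))= s\,H_{m,n}(y,y')^\intercal G_{m,n}(y,y')^{-1} H_{m,n}(y,y'),
$$
the matrix
$$
K_s(y,y'):=\left[\frac{\phi\left(H_{m,n}(y,y')^\intercal G_{m,n}(y,y')^{-1}H_{m,n}(y,y')\,s\right)}{\sqrt{\det G_{m,n}(y,y')}}\right]_{m,n=1}^p
$$
is precisely the kernel produced by Theorem \ref{p-detem} applied to the data $(\phi, G_{m,n}, \sqrt{s}\,H_{m,n})$. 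The $CND_p(Y)$ hypothesis on $(y,y')\mapsto[u^\intercal G_{m,n}(y,y')u]_{m,n=1}^p$ is untouched, and the $PD_p(Y)$ hypothesis transfers because $e^{\,i(\sqrt{s}\,H_{m,n}(y,y'))^\intercal u}=e^{\,iH_{m,n}(y,y')^\intercal(\sqrt{s}\,u)}$; as $u$ ranges over $\mathbb{R}^q$ with $s>0$ fixed, the vector $\sqrt{s}\,u$ ranges over all of $\mathbb{R}^q$ as well. Hence Theorem \ref{p-detem}-$(i)$ yields $K_s\in PD_p(Y)$ for every $s>0$.

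Next I would combine $K_s$ with the given kernel $(y,y')\mapsto[P_{m,n}^s(y,y')]_{m,n=1}^p\in PD_p(Y)$ via the Schur product $\bullet$. Evaluating both kernels at distinct points $y_1,\dots,y_N$ turns the entrywise product into the Hadamard product of two positive semi-definite block matrices of order $Np$, so the Schur Product Theorem gives that
$$
\left[\frac{\phi\left(H_{m,n}(y,y')^\intercal G_{m,n}(y,y')^{-1}H_{m,n}(y,y')\,s\right)}{\sqrt{\det G_{m,n}(y,y')}}\,P_{m,n}^s(y,y')\right]_{m,n=1}^p\in PD_p(Y)
$$
for each $s>0$. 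The kernel $K$ in the statement is exactly the $\rho$-integral of this family over $(0,\infty)$.

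Finally I would transfer positivity through the integral. For distinct points $y_1,\dots,y_N\in Y$ and vectors $c_1,\dots,c_N\in\mathbb{C}^p$, the finite quadratic form $\sum_{\mu,\nu=1}^N c_\mu^*K(y_\mu,y_\nu)c_\nu$ equals $\int_{(0,\infty)}\sum_{\mu,\nu=1}^N c_\mu^*\big(K_s\bullet[P_{m,n}^s]_{m,n=1}^p\big)(y_\mu,y_\nu)c_\nu\,d\rho(s)$, and each integrand is nonnegative by the previous step, so the form is $\geq 0$ and $K\in PD_p(Y)$. The one genuinely technical point is the interchange of the finite sum with the integral, which follows from Fubini once one notes that $\phi$ is bounded (by $\phi(0^+)<\infty$, and nonnegative by complete monotonicity) and that $\det G_{m,n}(y_\mu,y_\nu)>0$, so the integrand is dominated by a constant multiple of $s\mapsto P_{m,n}^s(y_\mu,y_\nu)$, which is $\rho$-integrable by hypothesis. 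I expect this integrability bookkeeping to be the only real obstacle; the conceptual core is the $\sqrt{s}$ rescaling that places the problem within the reach of Theorem \ref{p-detem}.
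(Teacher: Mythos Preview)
Your proposal is correct and follows essentially the same route as the paper: the paper also rescales to $\sqrt{s}\,H_{m,n}$, invokes Theorem~\ref{p-detem}-$(i)$ to obtain the positive definiteness of the resulting kernel (denoted $I^s$ there, your $K_s$), takes the Schur product with $[P_{m,n}^s]_{m,n=1}^p$, and integrates in $s$. The only difference is that you spell out the Fubini justification via the boundedness of $\phi$, whereas the paper simply asserts the interchange as a ``direct calculation''.
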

\begin{proof}
	Let $y_1, \ldots, y_N$ be distinct points in $Y$, $c_1, \ldots, c_N$ vectors in $\mathbb{C}^p$ and set $Q:=\sum_{\mu,\nu=1}^N c_\mu^{*}K(y_\mu,y_\nu)c_\nu$.\ Direct calculation shows that
	$$
	Q=\int_{(0,\infty)}\sum_{\mu,\nu=1}^N c_\mu^{*} \left[I^s(y_\mu,y_\nu) \bullet P^s(y_\mu,y_\nu)\right] c_\nu d\rho(s),
	$$
	where
	$$
	I^s(y,y')=\left[\frac{ \phi\left(\sqrt{s}H_{m,n}(y,y')^\intercal  G_{m,n}(y,y')^{-1} \sqrt{s}H_{m,n}(y,y')\right)}{\sqrt{\det G_{m,n}(y,y')}} \right]_{m,n=1}^p
	$$
	and
	$$
	P^s(y,y')=\left[P_{m,n}^s(y,y')\right]_{m,n=1}^p,\quad y,y' \in Y; s>0.
	$$
	As in the proof of Theorem \ref{p-detem}, the matrix functions
	$$
	(y,y')\in Y\times Y \mapsto \left[e^{\displaystyle{i\,\sqrt{s}H_{m,n}(y,y')^\intercal u}}\right]_{m,n=1}^p,\quad u\in \mathbb{R}^q; s>0,
	$$
	belong to $PD_p(Y)$.\ However, since the assumptions on the $G_{m,n}$ are the same as those in Theorem \ref{p-detem}, we can apply Theorem \ref{p-detem}-$(i)$ in order to see that each matrix $I^s(y_\mu,y_\nu)$
	is positive semi-definite.\ As for $P^s(y_\mu,y_\nu)$, $s>0$, they are positive semi-definite as well by our assumption on the family $\{P_{m,n}^s\}_{s>0}$.\ Thus, the Schur Product Theorem implies that
	$$
	\sum_{\mu,\nu=1}^N c_\mu^{*} \left[I^s(y_\mu,y_\nu) \bullet P^s(y_\mu,y_\nu)\right] c_\nu \geq 0,\quad s>0.
	$$
	Therefore, $Q\geq 0$.
\end{proof}

As for strict positive definiteness, the following consequence of Theorem \ref{p-detemen} holds.

\begin{thm}
	If $\phi$ is not identically zero, then the following additional assertions hold for the kernel $K$ defined in Theorem \ref{p-detemen}:
	\begin{itemize}
		\item[$(i)$]
		If there exists an open subset $A$ of $\mathbb{R}^q\setminus\{0\}$ so that
		$$
		u^\intercal [G_{m,m}(y,y)+G_{n,n}(y',y')-2G_{m,n}(y,y')]u<0,\quad (m,y)\neq (n,y');\, u\in U,
		$$ and a $\rho$-measurable subset $A$ of $(0,\infty)$ so that $\rho(A)>0$ and
		$$
		P_{m,m}^s(y,y)>0,\quad m\in \{1,\ldots,p\};\, y \in Y;\, s\in A,
		$$ then $K$ belongs to $SPD_p(Y)$.
		\item[$(ii)$] If there exists a $\rho$-measurable subset $A$ of $(0,\infty)$ so that $\rho(A)>0$ and
		$$
		(y,y')\in Y\times Y \mapsto [P_{m,n}^s(y,y')]_{m,n=1}^p \in SPD_p(Y), \quad s\in A,
		$$
		then $K$ belongs to $SPD_p(Y)$.
	\end{itemize}
\end{thm}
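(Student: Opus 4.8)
The plan is to resume the computation from the proof of Theorem~\ref{p-detemen}, where the relevant quadratic form was written as
\begin{equation*}
Q=\int_{(0,\infty)}\sum_{\mu,\nu=1}^N c_\mu^{*} \left[I^s(y_\mu,y_\nu) \bullet P^s(y_\mu,y_\nu)\right] c_\nu\, d\rho(s),
\end{equation*}
and where it was already shown that the integrand is nonnegative for every $s>0$. Hence, assuming the $y_\mu$ distinct and the $c_\mu$ not all zero, it suffices to produce a $\rho$-measurable set of positive $\rho$-measure on which the integrand is strictly positive; in each item the hypotheses single out exactly such a set $A$. The crucial preliminary observation is that $I^s$ is precisely the kernel of Theorem~\ref{p-detem} built from the rescaled function $\phi_s(t):=\phi(st)$. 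Since $(-1)^n\phi_s^{(n)}(t)=s^n\,(-1)^n\phi^{(n)}(st)\ge 0$ and $\sup\phi_s=\sup\phi$, the function $\phi_s$ is again bounded and completely monotone, and it is not identically zero whenever $\phi$ is not. Moreover, via the Bernstein--Widder representation $\phi(t)=\int_{[0,\infty)}e^{-t\tau}\,d\sigma(\tau)$ with $\sigma$ a nonzero finite positive measure, one sees that $\phi>0$ on all of $[0,\infty)$, a fact I will use to control diagonal entries.

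For item $(i)$ I would apply Theorem~\ref{p-detem}-$(ii)$ to $\phi_s$: the open-set hypothesis on the $G_{m,n}$ is identical to the one required there, so the full $Np\times Np$ block matrix $\mathbf{I}^s=[[I^s(y_\mu,y_\nu)_{m,n}]_{\mu,\nu=1}^N]_{m,n=1}^p$ is positive definite for every $s>0$. For $s\in A$ the assumption $P_{m,m}^s(y,y)>0$ forces the diagonal entries of the block matrix $\mathbf{P}^s=[[P_{m,n}^s(y_\mu,y_\nu)]_{\mu,\nu=1}^N]_{m,n=1}^p$ to be strictly positive, while $\mathbf{P}^s$ is positive semi-definite by the hypothesis carried over from Theorem~\ref{p-detemen}. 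Oppenheim's inequality (\cite[p. 509]{horn}) then yields that $\mathbf{I}^s\bullet\mathbf{P}^s$ is positive definite for $s\in A$, so the integrand is strictly positive there, and $\rho(A)>0$ gives $Q>0$.

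Item $(ii)$ proceeds symmetrically, with the roles of the two factors exchanged. For $s\in A$ the hypothesis makes $\mathbf{P}^s$ positive definite, while $\mathbf{I}^s$ is merely positive semi-definite by Theorem~\ref{p-detem}-$(i)$ applied to $\phi_s$; to invoke Oppenheim's inequality I now need the diagonal entries of $\mathbf{I}^s$ to be positive, and these equal $\phi_s\bigl(H_{m,m}(y_\mu,y_\mu)^\intercal G_{m,m}(y_\mu,y_\mu)^{-1}H_{m,m}(y_\mu,y_\mu)\bigr)/\sqrt{\det G_{m,m}(y_\mu,y_\mu)}$, which is strictly positive precisely because $\phi_s>0$ on $[0,\infty)$ and $G_{m,m}$ is positive definite. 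Oppenheim's inequality again delivers a positive definite Schur product on $A$, whence $Q>0$. The main obstacle I anticipate is bookkeeping rather than conceptual: one must verify that the Schur product of the $p\times p$ matrices $I^s(y_\mu,y_\nu)$ and $P^s(y_\mu,y_\nu)$ assembles into the honest Schur product of the two $Np\times Np$ Gram matrices, so that Oppenheim's inequality may be applied at the level of the full block matrices, and one must secure the strict positivity of $\phi$ on $[0,\infty)$, which is exactly what makes the diagonal argument in item $(ii)$ go through.
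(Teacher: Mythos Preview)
Your proof is correct and follows essentially the same route as the paper: represent $Q$ as an integral of Schur-product quadratic forms, use Theorem~\ref{p-detem} to control the $I^s$ factor and the hypotheses on $P^s$ to control the other, and invoke Oppenheim's inequality on the set $A$ of positive $\rho$-measure. The only cosmetic difference is in item~$(ii)$: the paper observes (from the $PD_p(Y)$ assumption on the exponential kernels) that $H_{m,m}(y,y)=0$, so the diagonal entries of $\mathbf{I}^s$ equal $\phi(0)/\sqrt{\det G_{m,m}(y_\mu,y_\mu)}>0$ directly, whereas you bypass this by noting that a nonzero bounded completely monotone function is strictly positive everywhere on $[0,\infty)$; both arguments are valid.
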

\begin{proof}
	Let the $x_\mu$ and the $c_\mu$ be as in the proof of Theorem \ref{p-detemen}.\ Further, assume at least one $c_\mu$ is nonzero.\ If the assumptions in $(i)$ hold, then Theorem \ref{p-detem}-$(ii)$ implies that each matrix $I^s(y_\mu,y_\nu)$ is positive definite while the diagonal entries in
	$P^s(y_\mu,y_\nu)$ are all positive for $s\in A$.\ Therefore, by Oppenheim's inequality, we can assert that
	$$
	\sum_{\mu,\nu=1}^N c_\mu^{*} \left[I^s(y_\mu,y_\nu) \bullet P^s(y_\mu,y_\nu)\right] c_\nu > 0, \quad s\in A.
	$$
	Since the measure $\rho$ is nonzero, $Q>0$.\ If the assumptions in $(ii)$ hold, we may reach the very same conclusion once the diagonal elements in $I^s(y_\mu,y_\nu)$, $s>0$,
	are given by
	$$
	\frac{\phi(0)}{\sqrt{\det G_{m,m}(y_\mu,y_\mu)}} > 0, \quad m=1,\ldots, p; \mu=1,\dots, q.
	$$
	Indeed, Oppenheim's inequality once again would imply that $Q>0$.
\end{proof}

A specially chosen family $\{G_{m,n}:m,n=1,\ldots, p\}$ in Theorem \ref{p-detemen} leads to the following improved abstract multivariate version of Gneiting's criterion in \cite{gneiting}.

\begin{cor} \label{conse}
	Let $\phi : (0,\infty) \to \mathbb{R}$ be a bounded and completely monotone function.\ For $m,n=1,2,\ldots,p$, set $G_{m,n}(y,y')=g_{m,n}(y,y')I_q$, $y,y'\in Y$, where each $g_{m,n}$ is a positive valued kernel in $CND_1(Y)$,
	let $H_{m,n}: Y \times Y \to \mathbb{R}^q$ be a vector function and $\{P_{m,n}^s\}_{s>0}$ a family of kernels on $Y$ such that each function $s\in (0,\infty) \mapsto P_{m,n}^s(y,y')$ is $\rho$-integrable.\ If the matrix functions
	$$
	(y,y')\in Y \times Y \mapsto [u^{\intercal} G_{m,n}(y,y')u]_{m,n=1}^p,\quad u \in \mathbb{R}^q,
	$$
	$$
	(y,y')\in Y\times Y \mapsto \left[e^{\displaystyle{i\,H_{m,n}(y,y')^\intercal u}}\right]_{m,n=1}^p,\quad u\in \mathbb{R}^q,
	$$
	and
	$$
	(y,y')\in Y\times Y \mapsto [P_{m,n}^s(y,y')]_{m,n=1}^p, \quad s>0,
	$$
	belong to $CND_{p}(Y)$, $PD_p(Y)$, and $PD_p(Y)$, respectively, then the kernel $K: Y\times Y \to M_p(\mathbb{R})$
	given by the formula
	$$
	K(y,y')=\left[\frac{1}{g_{m,m}(y,y')^{q/2}}\int_0^\infty \phi\left(\frac{\|H_{m,n}(y,y')\|^2s}{g_{m,n}(y,y')}\right)P_{m,n}^s(y,y')d\rho(s)\right]_{m,n=1}^p
	$$
	belongs to $PD_p(Y)$.\ Further, if $\phi$ is not identically 0, the following two additional assertions hold:
	\begin{itemize}
		\item[$(i)$] If $g_{m,m}(y,y)+g_{n,n}(y',y')-2g_{m,n}(y,y')<0$ when $(m,y)\neq (n,y')$, and there exists a $\rho$-measurable subset $A$ of $(0,\infty)$ so that $\rho(A)>0$ and
		$$
		P_{m,m}^s(y,y)>0, \quad m\in \{1,\ldots,p\};\, y \in Y;\, s\in A,
		$$
		then $K$ belongs to $SPD_p(Y)$.
		\item[$(ii)$] If there exists a $\rho$-measurable subset $A$ of $(0,\infty)$ so that $\rho(A)>0$ and
		$$
		(y,y')\in Y\times Y \mapsto [P_{m,n}^s(y,y')]_{m,n=1}^p,
		$$
		belongs to $SPD_p(Y)$ for $s\in A$, then $K$ belongs to $SPD_p(Y)$.
	\end{itemize}	
\end{cor}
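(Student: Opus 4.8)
The plan is to derive this corollary as a direct specialization of Theorem \ref{p-detemen} together with its strict companion (the unlabelled theorem stated immediately afterward), obtained by substituting $G_{m,n}(y,y')=g_{m,n}(y,y')I_q$. First I would record the elementary identities produced by this choice: since $g_{m,n}(y,y')>0$,
$$
\det G_{m,n}(y,y')=g_{m,n}(y,y')^{q},\qquad G_{m,n}(y,y')^{-1}=g_{m,n}(y,y')^{-1}I_q,
$$
so that
$$
H_{m,n}(y,y')^\intercal G_{m,n}(y,y')^{-1}H_{m,n}(y,y')=\frac{\|H_{m,n}(y,y')\|^2}{g_{m,n}(y,y')},\qquad u^\intercal G_{m,n}(y,y')u=\|u\|^2\,g_{m,n}(y,y').
$$

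Substituting these into the kernel of Theorem \ref{p-detemen} turns its defining formula into exactly the formula claimed here, with $\sqrt{\det G_{m,n}}=g_{m,n}^{q/2}$ and $\phi$ evaluated at $\|H_{m,n}\|^2 s/g_{m,n}$. The only hypothesis of Theorem \ref{p-detemen} that must be re-examined is the conditional negative definiteness of $(y,y')\mapsto[u^\intercal G_{m,n}(y,y')u]_{m,n=1}^p$; but the second identity above shows this matrix equals $\|u\|^2\,[g_{m,n}(y,y')]_{m,n=1}^p$, so the assumed membership in $CND_p(Y)$ holds. Indeed, for all $u$ this condition is equivalent to the single statement $[g_{m,n}]_{m,n=1}^p\in CND_p(Y)$, since multiplying by the nonnegative scalar $\|u\|^2$ preserves negative type. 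The hypotheses on the $H_{m,n}$ and on $\{P_{m,n}^s\}_{s>0}$ are identical to those in Theorem \ref{p-detemen}, so $K\in PD_p(Y)$ follows at once.

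For the two strict assertions I would invoke the strict-positive-definiteness theorem following Theorem \ref{p-detemen}. The only point requiring translation is its open-set condition. Using $u^\intercal[G_{m,m}(y,y)+G_{n,n}(y',y')-2G_{m,n}(y,y')]u=\|u\|^2[g_{m,m}(y,y)+g_{n,n}(y',y')-2g_{m,n}(y,y')]$ and taking $U=\mathbb{R}^q\setminus\{0\}$, the scalar inequality assumed in $(i)$ for $(m,y)\neq(n,y')$ immediately yields the required strict negativity for every $u\in U$, since $\|u\|^2>0$. Combined with the hypothesis $P_{m,m}^s(y,y)>0$ on a set $A$ of positive $\rho$-measure, part $(i)$ of that theorem gives $K\in SPD_p(Y)$. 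Part $(ii)$ of the corollary is verbatim part $(ii)$ of that theorem, so no further computation is needed.

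This argument is essentially bookkeeping, and I do not anticipate a genuine obstacle; the only place to be careful is the open-set translation, where one must note that multiplying the scalar inequality on $g$ by the strictly positive factor $\|u\|^2$ is precisely what lets a single scalar hypothesis cover the whole of $U=\mathbb{R}^q\setminus\{0\}$. A minor point worth flagging is the apparent misprint $g_{m,m}$ in the denominator of the stated formula, which the identity $\sqrt{\det G_{m,n}}=g_{m,n}^{q/2}$ shows should read $g_{m,n}^{q/2}$.
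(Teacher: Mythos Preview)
Your proposal is correct and matches the paper's intent exactly: the paper presents this corollary without proof, simply announcing it as the specialization of Theorem \ref{p-detemen} (and its strict companion) obtained by taking $G_{m,n}=g_{m,n}I_q$, which is precisely the substitution and bookkeeping you carry out. Your observation that the denominator $g_{m,m}^{q/2}$ should read $g_{m,n}^{q/2}$ is also correct, as the identity $\sqrt{\det G_{m,n}}=g_{m,n}^{q/2}$ makes clear.
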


\section{The main results in the case of a product of sets}

An easy way to construct kernels in $PD_p(X \times Y)$ is given by the product of a kernel in $PD_p(X)$
with another one in $PD_p(Y)$, a fact that can be ratified via the Schur Product Theorem.\ The separable kernels produced by this method
may be not suitable if one needs strong interactions between $X$ and $Y$.\ The main result in this section will provide a version of Theorem \ref{p-detem}
that leads to kernels in $PD_p(X \times Y)$ and, except for very particular cases, the kernels produced by this version will be nonseparable.\ In particular, the
aforementioned interactions are possible.\
The result explains, from a mathematical point of view, some important practical models adopted in the statistical literature.\ The proofs will be omitted once
they are very similar to those of the theorems proved in Sections 2 and 3.

\begin{thm} \label{p-detemprod}
	Let $\phi : (0,\infty) \to \mathbb{R}$ be a bounded and completely monotone function.\ For each $m,n$ in $\{1,\ldots, p\}$, let $G_{m,n}:Y \times Y \to M_q(\mathbb{R})$ be
	a matrix function with range containing positive definite matrices only and $H_{m,n}: X \times X \to \mathbb{R}^q$ a vector function.\ If the matrix functions
	$$
	(y,y')\in Y \times Y \mapsto [u^{\intercal} G_{m,n}(y,y')u]_{m,n=1}^p,\quad u \in \mathbb{R}^q,
	$$
	belong to $CND_{p}(Y)$ and
	$$
	(x,x')\in X\times X \mapsto \left[e^{\displaystyle{i\,H_{m,n}(x,x')^{\intercal} u}}\right]_{m,n=1}^p,\quad u\in \mathbb{R}^q,
	$$
	belong to $PD_p(X)$, then the kernel
	$K: (X\times Y)^2 \to M_p(\mathbb{R})$ given by
	$$
	K((x,y),(x',y'))=\left[\frac{\phi\left(H_{m,n}(x,x')^\intercal  G_{m,n}(y,y')^{-1} H_{m,n}(x,x') \right)}{\sqrt{\det G_{m,n}(y,y')}} \right]_{m,n=1}^p
	$$
	belongs to $PD_p(X \times Y)$.
\end{thm}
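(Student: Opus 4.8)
The plan is to follow the proof of Theorem \ref{p-detem} almost verbatim, the only genuine novelty being that the two matrix ingredients are now evaluated on different factors of the product $X\times Y$. First I would apply the Bernstein--Widder Theorem to write $\phi$ as the Laplace transform of a finite positive measure $\sigma$ on $[0,\infty)$, and then invoke the generalized Aitken integral formula entry by entry. Taking $A=G_{m,n}(y,y')$ and $b=2\sqrt{s}\,H_{m,n}(x,x')$, so that $b^{\intercal}(4A)^{-1}b=s\,H_{m,n}(x,x')^{\intercal}G_{m,n}(y,y')^{-1}H_{m,n}(x,x')$, each entry of $K$ acquires the integral representation
$$
K((x,y),(x',y'))=\left[\frac{1}{\pi^{q/2}}\int_{[0,\infty)}\int_{\mathbb{R}^q}e^{\displaystyle{-u^{\intercal}G_{m,n}(y,y')u}}\,e^{\displaystyle{2i\sqrt{s}\,H_{m,n}(x,x')^{\intercal}u}}\,du\,d\sigma(s)\right]_{m,n=1}^p.
$$

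Next, for distinct points $(x_1,y_1),\ldots,(x_N,y_N)$ in $X\times Y$ and vectors $c_1,\ldots,c_N$ in $\mathbb{C}^p$, I would form $Q:=\sum_{\mu,\nu=1}^N c_\mu^{*}K((x_\mu,y_\mu),(x_\nu,y_\nu))c_\nu$ and interchange the summation with the double integral, which is legitimate since $\sigma$ is finite and the inner Gaussian integrals converge absolutely. This recasts $Q$ as
$$
Q=\frac{1}{\pi^{q/2}}\int_{[0,\infty)}\int_{\mathbb{R}^q}\sum_{\mu,\nu=1}^N c_\mu^{*}\left[E_u(y_\mu,y_\nu)\bullet E_u^s(x_\mu,x_\nu)\right]c_\nu\,du\,d\sigma(s),
$$
with $E_u(y,y')=[e^{-u^{\intercal}G_{m,n}(y,y')u}]_{m,n=1}^p$ and $E_u^s(x,x')=[e^{2i\sqrt{s}\,H_{m,n}(x,x')^{\intercal}u}]_{m,n=1}^p$.

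The decisive step is to recognize that the two $Np\times Np$ block matrices $[[E_u(y_\mu,y_\nu)]_{m,n}]_{\mu,\nu}$ and $[[E_u^s(x_\mu,x_\nu)]_{m,n}]_{\mu,\nu}$ are each positive semi-definite. The first is precisely the Hadamard exponential of $-[[u^{\intercal}G_{m,n}(y_\mu,y_\nu)u]_{m,n}]_{\mu,\nu}$, whose argument is a real symmetric matrix of negative type by the $CND_p(Y)$ hypothesis; hence it is positive semi-definite by the property of Hadamard exponentials recorded before the statement of Theorem \ref{p-detem}. The second coincides with the matrix furnished by the $PD_p(X)$ hypothesis on $(x,x')\mapsto[e^{i\,H_{m,n}(x,x')^{\intercal}v}]_{m,n}$ evaluated at the admissible choice $v=2\sqrt{s}\,u\in\mathbb{R}^q$. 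The Schur Product Theorem then forces each integrand to be positive semi-definite, so $Q\ge 0$ and $K\in PD_p(X\times Y)$.

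The only point requiring care---and the very source of the nonseparability advertised in the section opening---is that $G$ is sampled at the $Y$-coordinates while $H$ is sampled at the $X$-coordinates of the same sample points. I would stress that this causes no obstacle precisely because both families remain indexed by the common label $\mu$ through the pairs $(x_\mu,y_\mu)$: the two $Np\times Np$ matrices share a compatible block-index layout, so their entrywise product is a bona fide Hadamard product and the Schur Product Theorem applies without modification. Since no strict positive definiteness is asserted, the Oppenheim-type refinement used for Theorem \ref{p-detem}-$(ii)$ is not needed here.
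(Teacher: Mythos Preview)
Your proposal is correct and follows exactly the route the paper intends: the paper omits the proof of this theorem, stating only that it is ``very similar to those of the theorems proved in Sections 2 and 3,'' and your argument is a faithful adaptation of the proof of Theorem~\ref{p-detem} to the product setting, using Bernstein--Widder, the generalized Aitken integral, the Hadamard-exponential/CND link, and the Schur Product Theorem in the same order. The one point you might make explicit is that although the $(x_\mu,y_\mu)$ are distinct in $X\times Y$, the individual coordinates $y_\mu$ (respectively $x_\mu$) need not be distinct in $Y$ (respectively $X$); this is harmless because the $CND_p$ and $PD_p$ conditions automatically extend to repeated points by grouping coincident evaluations.
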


In the Example \ref{exa} below, we illustrate Theorem \ref{p-detemprod} in the case $X=\mathbb{R}$ and $Y=S^d$, the unit sphere in $\mathbb{R}^{d+1}$.\

\begin{ex} Define $H_{m,n}(x,x')=h_m(x)-h_n(x')$, $x,x'\in \mathbb{R}$, where each $h_m: \mathbb{R} \to \mathbb{R}^q$ is an arbitrary function.\ If $\delta$ denotes the geodesic distance in $S^d$, set
$$
		G_{m,n}(y,y')=[m+n+\delta(y,y')]I_q, \quad y,y'\in S^d.
		$$
		It is well known that $(y,y')\in S^d \times S^d \mapsto \delta(y,y')$ belongs to $CND_1(S^d)$ (see Section 4 in \cite{alex}).\ Hence, each $G_{m,n}$ has range containing positive definite matrices only.\
		On the other hand, according to Examples \ref{exdi} and \ref{exdi2},
		each kernel
		$$
		(y,y')\in Y \times S^d\mapsto [u^\intercal G_{m,n}(y,y')u ]_{m,n=1}^p, \quad u\in \mathbb{R}^q,
		$$
		belongs to $CND_p(S^d)$.\ It follows that
		$$
		K((x,y),(x',y')) = \left[\frac{1}{[m+n+\delta(y,y')]^{q/2}}\phi\left(\frac{\|h_m(x)-h_n(x')\|^2}{m+n+\delta(y,y')}\right) \right]_{m,n=1}^p
		$$
		belongs to $PD_p(\mathbb{R}\times S^d)$.\ The choice
		$$
		h_m(x)=(x,0,\ldots,0)^\intercal,\quad x\in \mathbb{R};\, m=1,\ldots,p,
		$$
		leads to the simpler example
		$$
		K((x,y),(x',y')) = \left[\frac{1}{[m+n+\delta(y,y')]^{q/2}}\phi\left(\frac{(x-x')^2}{m+n+\delta(y,y')}\right) \right]_{m,n=1}^p
		$$
		in $PD_p(\mathbb{R}\times S^d)$.
\end{ex}

A version of Theorem \ref{p-detemen} for kernels acting on the product $X \times Y$ is as follows.

\begin{thm} \label{pp-detemen}
	Let $\rho$ be a nonzero positive measure on $(0,\infty)$ and $\phi$ a bounded and completely monotone function.\ For each $m,n$ in $\{1,\ldots, p\}$, let $G_{m,n}:Y \times Y \to M_q(\mathbb{R})$ be a matrix function with range containing positive definite matrices only, $H_{m,n}: X \times X \to \mathbb{R}^q$ vector functions and $\{P_{m,n}^s\}_{s>0}$ a family of kernels on $X\times Y$ such that each function $s\in (0,\infty) \mapsto P_{m,n}^s((x,y),(x',y'))$ is $\rho$-integrable.\ If the matrix functions
	$$
	(y,y')\in Y \times Y \mapsto [u^{\intercal} G_{m,n}(y,y')u]_{m,n=1}^p,\quad u \in \mathbb{R}^q,
	$$
	$$
	(x,x')\in X\times X \mapsto \left[e^{\displaystyle{i\,H_{m,n}(x,x')^\intercal u}}\right]_{m,n=1}^p,\quad u\in \mathbb{R}^q,
	$$
	and
	$$
	((x,y),(x',y'))\in Y\times Y \mapsto [P_{m,n}^s((x,y),(x',y'))]_{m,n=1}^p, \quad s>0,
	$$
	belong to $CND_{p}(Y)$, $PD_p(Y)$, and $PD_p(X\times Y)$, respectively, then the matrix kernel $K=[K_{m,n}]_{m,n=1}^p: (X\times Y)^2 \to M_p(\mathbb{R})$ given by the formula
	\begin{eqnarray*}
		K_{m,n}((x,y) \hspace{-3mm}&,&\hspace{-3mm}(x',y')) = \frac{1}{\sqrt{\det G_{m,n}(y,y')}}  \\
		& & \hspace*{-10mm}\times \int_{(0,\infty)} \phi\left(H_{m,n}(x,x')^\intercal  G_{m,n}(y,y')^{-1} H_{m,n}(x,x')\, s\right) P_{m,n}^s((x,y),(x',y'))d\rho(s)
	\end{eqnarray*}
	belongs to $PD_p(X\times Y)$.
\end{thm}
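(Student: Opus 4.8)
The plan is to imitate the proof of Theorem \ref{p-detemen} almost verbatim, the only structural novelty being that the three ingredients now live on different domains: $H_{m,n}$ on $X$, $G_{m,n}$ on $Y$, and $P_{m,n}^s$ on the product $X\times Y$. First I would fix distinct points $(x_1,y_1),\ldots,(x_N,y_N)$ in $X\times Y$ and vectors $c_1,\ldots,c_N$ in $\mathbb{C}^p$, and form the quadratic form $Q:=\sum_{\mu,\nu=1}^N c_\mu^{*}K((x_\mu,y_\mu),(x_\nu,y_\nu))c_\nu$. Since each scalar map $s\mapsto P_{m,n}^s((x_\mu,y_\mu),(x_\nu,y_\nu))$ is $\rho$-integrable and $\phi$ is bounded, the integrand defining each $K_{m,n}$ is dominated by a $\rho$-integrable function; as the outer sum is finite, Fubini's theorem lets me interchange summation and integration to write
$$
Q=\int_{(0,\infty)}\sum_{\mu,\nu=1}^N c_\mu^{*}\left[I^s((x_\mu,y_\mu),(x_\nu,y_\nu))\bullet P^s((x_\mu,y_\mu),(x_\nu,y_\nu))\right]c_\nu\,d\rho(s),
$$
where $P^s=[P_{m,n}^s]_{m,n=1}^p$ and
$$
I^s((x,y),(x',y'))=\left[\frac{\phi\left(H_{m,n}(x,x')^\intercal G_{m,n}(y,y')^{-1}H_{m,n}(x,x')\,s\right)}{\sqrt{\det G_{m,n}(y,y')}}\right]_{m,n=1}^p.
$$

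The key step is to recognize $I^s$, for each fixed $s>0$, as an instance of the kernel produced by Theorem \ref{p-detemprod}. Indeed, writing $\phi(H^\intercal G^{-1}H\,s)=\phi\bigl((\sqrt{s}\,H)^\intercal G^{-1}(\sqrt{s}\,H)\bigr)$, the kernel $I^s$ is precisely the one in Theorem \ref{p-detemprod} with the vector functions $H_{m,n}$ replaced by $\sqrt{s}\,H_{m,n}$ and with the same $\phi$ and $G_{m,n}$. The hypothesis on the $G_{m,n}$ is left untouched, and the hypothesis on the $H_{m,n}$ transfers to $\sqrt{s}\,H_{m,n}$ because $(x,x')\mapsto[e^{i(\sqrt{s}\,H_{m,n}(x,x'))^\intercal u}]=[e^{i H_{m,n}(x,x')^\intercal(\sqrt{s}\,u)}]$ belongs to $PD_p(X)$ for every $u\in\mathbb{R}^q$ (the original assumption holds for all real vectors, and $\sqrt{s}\,u$ again runs over $\mathbb{R}^q$ when $s>0$). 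Theorem \ref{p-detemprod} then yields $I^s\in PD_p(X\times Y)$ for each $s>0$.

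With $P^s\in PD_p(X\times Y)$ supplied by hypothesis, the evaluations at the chosen points give two positive semi-definite block matrices of order $Np$, and their Schur product is exactly the $Np\times Np$ matrix appearing in the integrand. The Schur Product Theorem therefore forces each inner quadratic form to be nonnegative, so that the integrand is $\rho$-almost everywhere nonnegative; integrating against the positive measure $\rho$ delivers $Q\ge 0$, which is the desired assertion.

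I expect the only genuine obstacle to be bookkeeping rather than substance: one must verify that the scaling $H_{m,n}\mapsto\sqrt{s}\,H_{m,n}$ preserves the exponential hypothesis (handled above via the arbitrariness of $u$) and that the interchange of sum and integral is legitimate (handled via the boundedness of $\phi$ and the $\rho$-integrability of the $P_{m,n}^s$). Both are routine, and everything substantive is already encapsulated in Theorem \ref{p-detemprod} and the Schur Product Theorem, exactly mirroring the single-set argument of Theorem \ref{p-detemen}.
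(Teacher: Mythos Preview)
Your proposal is correct and follows exactly the route the paper intends: the paper omits the proof of Theorem \ref{pp-detemen}, stating only that it is ``very similar to those of the theorems proved in Sections 2 and 3,'' and your argument is precisely the product-space transcription of the proof of Theorem \ref{p-detemen}, invoking Theorem \ref{p-detemprod} in place of Theorem \ref{p-detem}. The scaling observation $H_{m,n}\mapsto\sqrt{s}\,H_{m,n}$ and the Schur Product Theorem step match the paper's proof of Theorem \ref{p-detemen} line for line.
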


We now move to some specific applications of Theorem \ref{pp-detemen}.

\begin{ex}\label{porcufurado}
Here we will employ the formula deduced in Theorem 1.1 in \cite{cho}:
		$$
		\mathcal{M}_\nu(r\sqrt{u})=\frac{r^{2\nu}}{2^{2\nu} \Gamma(\nu)}\int_0^\infty e^{\displaystyle{-s\;u}} e^{\displaystyle{-r^2/4s}}s^{-\nu-1}ds, \quad r,u>0.
		$$
		that defines the so-called {\em Mat\'{e}rn function}.\ This function is studied in details in \cite{cho}.\
		We may apply Theorem \ref{p-detemen} with $\phi(u)=\exp(-u)$, $u>0$ and $d\rho(s)=e^{-r^2/4s} s^{-1}ds$.\ If for $x,x'\in X$ and $y,y' \in Y$ we set
		$$
		2v_{m,n}((x,y),(x',y')):=v_m(x,y)+v_n(x',y'),
		$$
		where $v_m: X\times Y \to (0,\infty)$, for all $m$, and
		$$
		P^s_{m,n}((x,y),(x',y')):= \frac{r^{\displaystyle{2v_{m,n}((x,y),(x',y'))}} s^{\displaystyle{-v_{m,n}((x,y),(x', y'))}}}{2^{\displaystyle{2v_{m,n}((x,y),(x',y'))}} },
		$$
		for $x,x'\in X$ and $y,y'\in Y$, it is easily seen that the kernels
		$$
		((x,y),(x',y'))\in (X\times Y)^2 \mapsto \left[P_{m,n}^s((x,x'),(y,y'))\right]_{m,n=1}^p, \quad s>0,
		$$
		belong to $PD_p(X\times Y)$.\ If each $s\in (0,\infty) \mapsto s^{-v_{m,n}((x,y),(x',y'))/2}$ is $\rho$-integrable, Theorem \ref{pp-detemen} implies that the formula
		\begin{eqnarray*}
			K_{m,n}((x,y)\hspace{-3mm}&,&\hspace{-3mm}(x',y')) =  \frac{\Gamma(v_{m,n}((x,y),(x',y'))}{\sqrt{\det G_{m,n}(y,y')}}\\
			&  & \hspace*{-5mm}  \times\mathcal{M}_{v_{m,n}((x,y),(x',y'))}(r(H_{m,n}(x,x')^\intercal
			G_{m,n}(y,y')^{-1} H_{m,n}(x,x'))^{1/2})
		\end{eqnarray*}
		defines a kernel $K((x,y),(x',y'))=[K_{m,n}((x,y),(x',y'))]_{m,n=1}^p$ that belongs to $PD_p(X\times Y)$, as long as the $G_{m,n}$ and the $H_{m,n}$ satisfy the assumptions of the theorem.\
		We could also modify the $P_{m,n}^s$
		by introducing a matrix $[r_{m,n}]_{m,n=1}^p$ with positive entries, by setting
		$$
		P^s_{m,n}((x,y),(x',y')):= \frac{r_{m,n}^{\displaystyle{2v_{m,n}((x,y),(x',y'))}} s^{\displaystyle{-v_{m,n}((x,y),(x', y'))}}}{2^{\displaystyle{2v_{m,n}((x,y),(x',y'))}} },
		$$
		for $x,x'\in X$ and $y,y'\in Y$,
		as long as the kernels
		$$
		((x,y),(x',y')\in (X\times Y)^2\mapsto \left[P_{m,n}^s((x,y),(x',y'))\right]_{m,n=1}^p, \quad s>0,
		$$
		stay in $PD_p(X\times Y)$.\ In this case, the outcome of Theorem \ref{p-detemen} would be that the formula
		\begin{eqnarray*}
			K_{m,n}((x, y)\hspace{-3mm}&,&\hspace{-3mm}(x',y'))  =   \frac{\Gamma(v_{m,n}((x,y),(x',y'))}{\sqrt{\det G_{m,n}(y,y')}}\\
			& & \hspace*{-10mm}\times\mathcal{M}_{v_{m,n}((x,y),(x',y'))}(r_{m,n}(H_{m,n}(x,x')^\intercal  G_{m,n}(y,y')^{-1} H_{m,n}(x,x'))^{1/2})
		\end{eqnarray*}
		defines a kernel $K((x,y),(x',y'))=[K_{m,n}((x,y),(x',y'))]_{m,n=1}^p$ in $PD_p(X\times Y)$, if we keep the assumptions on the $G_{m,n}$ and the $H_{m,n}$ required in the theorem.\
		An specific and simple example in the space-time setting can be produced in analogy with Theorem 1 in \cite{bourotte}: set $Y=\mathbb{R}^d$, $X=\mathbb{R}$,
		$$
		G_{m,n}(y,y')=g(\|y-y'\|^2)I_q,\quad y,y'\in \mathbb{R}^d; m,n=1,\ldots,p,
		$$
		where $g:(0,\infty)\to (0,\infty)$ has a completely monotone derivative and
		$$
		H_{m,n}(x,x')=x-x',\quad x,x' \in \mathbb{R}; m,n=1,\ldots,p.
		$$
		Since $(y,y')\in \mathbb{R}^d \times \mathbb{R}^d \mapsto g(\|y-y'\|^2)$ belongs to $CND_1(\mathbb{R}^d)$ by a result of Micchelli (\cite{micchelli}), it follows that the matrix kernels $
		(y,y')\in Y \times Y \mapsto [u^{\intercal} G_{m,n}(y,y')u]_{m,n=1}^p$, $u \in \mathbb{R}^q$,
		belong to $CND_p(\mathbb{R}^d)$.\ If we put
		$$
		v_{m,n}((x,y),(x',y'))=\frac{v_m+v_n}{2}, \quad x,x'\in \mathbb{R}; y,y'\in Y; m,n=1,\ldots, p,
		$$
		in which each $v_m$ is a positive constant and properly specify  $[r_{m,n}]_{m,n=1}^p$, then for $x,x'\in \mathbb{R}$ and $y,y'\in \mathbb{R}^d$ the formula
		$$
		P^s_{m,n}((x,y),(x',y')):= \frac{r_{m,n}^{\displaystyle{v_m+v_n}} s^{\displaystyle{-(v_m +v_n)/2}}}{2^{\displaystyle{v_m+v_n}} }, \ \ m,n=1,\ldots,p,
		$$
		defines kernels
		$$
		((x,y),(x',y'))\in (\mathbb{R} \times \mathbb{R}^d)^2 \mapsto [P^s_{m,n}((x,y),(x',y'))]_{m,n=1}^p, \quad s>0
		$$
		in $PD_p(\mathbb{R} \times \mathbb{R}^d)$.\ An application of Theorem \ref{pp-detemen} would lead to
		$$
		K_{m,n}((x,y),(x',y'))= \frac{\Gamma((v_m+v_n)/2)}{g(\|y-y'\|^{p/2}}\mathcal{M}_{(v_m+v_n)/2}\left(r_{mn}\frac{\|x-x'\|^2}{g(\|y-y'\|^2)}\right)
		$$
		with $K((x,y),(x',y'))=[K_{m,n}((x,y),(x',y'))]_{m,n=1}^p$ in $PD_p(\mathbb{R} \times \mathbb{R}^d)$.\ We observe that the factor $\Gamma((v_m+v_n)/2)$ can be eliminated as long as we can specify
		$[r_{mn}]_{m,n=1}^p$ in such a way that $[r_{m,n}^{v_m+v_n}/\Gamma((v_m+v_n)/2)]_{m,n=1}^p$ is a positive definite matrix.\
		Theorem 1 in \cite{kleiber} is another construction that fits into Theorem \ref{pp-detemen}.\ Details on that will be left to the readers.
\end{ex}

\begin{ex} The so-called generalized Cauchy function (\cite[p.337]{grad}) is given by
		$$
		\frac{1}{(1+cu^\gamma)^{\nu}}=\frac{c^{-\nu}}{\Gamma(\nu)} \int_0^\infty e^{-s\,u^\gamma} s^{\nu}d\rho(s), \quad u\geq 0,
		$$
		where $c>0$, $\nu>1$, $\gamma\in(0,1]$, and $d\rho(s)=s^{-1}\exp(-\, s/c)$.\ In order to apply Theorem \ref{pp-detemen} we now set
		$\phi(u)=e^{-u^\gamma}$, $u>0$ and
		$$
		P_{m,n}^s((x,y),(x',y')=\left(\frac{s}{c}\right)^{v_m(x,y)+v_n(x',y')}, \quad s>0;\, y,y'\in Y,
		$$
		where $v_m: X \times Y \to (0,\infty)$ is chosen in such a way that each $s\in(0,\infty) \mapsto s^{v_m(x,y)/2}$ is $\rho$-integrable.\ The outcome is that
		\begin{eqnarray*}
			K_{m,n}((x,y),(x',y')) & = & \frac{\Gamma(v_m(x,y)+v_n(x',y'))}{\sqrt{\det G_{m,n}(y,y')}}\\
			& & \times \frac{1}{\left(1+c(H_{m,n}(x,x')^\intercal  G_{m,n}(y,y')^{-1} H_{m,n}(x,x')^\gamma\right)^{v_m(x,y)+v_n(x',y')}},
		\end{eqnarray*}
		defines a kernel $K((x,y),(x',y'))=[K_{m,n}((x,y),(x',y'))]_{m,n=1}^p$ in $PD_p(X\times Y)$, if we keep the assumptions on the $G_{m,n}$ and the $H_{m,n}$ required in the theorem.\
		Arguments similar to those developed in the second half of Example \ref{porcufurado} leads to an example aligned with Theorem 2 in \cite{bourotte}.
\end{ex}

\section{A further extension }

As a final remark let us point an improvement that one can make in all the theorems proved in this paper.\ If for each $m$ and $n$ in $\{1,\ldots, p\}$, $G_{m,n}:Y \times Y \to M_q(\mathbb{R})$ is a matrix function with range containing positive definite matrices only, Theorem \ref{p-detem} justifies the following fact: if the matrix kernels
$$
(y,y')\in Y \times Y \mapsto [u^{\intercal} G_{m,n}(y,y')u]_{m,n=1}^p,\quad u \in \mathbb{R}^q,
$$
belong to $CND_{p}(Y)$, then the kernel $K$ given by
$$
K(y,y')=\left[\frac{1}{\sqrt{\det G_{m,n}(y,y')}}\right]_{m,n=1}^p, \quad y,y' \in Y,
$$
belongs to $PD_p(Y)$.\ Under the same setting, it follows from the Schur Product Theorem that
$$
K_l(y,y')=\left[\frac{1}{[\det G_{m,n}(y,y')]^{l/2}}\right]_{m,n=1}^p, \quad y,y' \in Y.
$$
belongs to $PD_p(Y)$ whenever $l\in \{1,2,\ldots\}$.\ In particular, we can introduce the same power $l/2$
in the assertions of all the theorems proved in the paper.

%
%

\vspace*{1cm}

\noindent V. A. Menegatto \\
Departamento de
Matem\'atica - ICMC-USP - S\~ao Carlos\\
Caixa Postal 668\\
13560-970, S\~ao Carlos SP, Brazil\\
E-mail: menegatt@gmail.com
\\
\\
C. P. Oliveira\\
 Instituto de Matem\'{a}tica e Computa\c c\~{a}o - UNIFEI\\
 Av. BPS, 1303, Pinheirinho \\
 37500-903, Itajub\'{a} MG, 	Brazil.\\
E-mail: oliveira@unifei.edu.br


\begin{thebibliography}{99}

\bibitem{alex} Alexander, R.; Stolarsky, K. B., Extremal problems of distance geometry related to energy integrals.\ {\em Trans. Amer. Math. Soc.} {\bf 193} (1974), 1-31.
%
\bibitem{alfonsi} Alfonsi, A; Klock, F.; Schied, A., Multivariate transient price impact and matrix-valued positive definite functions.\  {\em Math. Oper. Res.} {\bf 41} (2016), no. 3, 914-934.
%
\bibitem{belton} Belton, A.; Guillot, D.; Khare, A.; Putinar, M., A Panorama of Positivity. I: Dimension Free.\ In: Aleman A., Hedenmalm H., Khavinson D., Putinar M. (eds).\ Analysis of Operators on Function Spaces.\ Trends in Mathematics.\ Birkh\'{a}user, Cham, 117-165, 2019.
%
\bibitem{berg} Berg, C.; Christensen, J. P. R.; Ressel, P., Harmonic analysis on semigroups.\ Theory of positive definite and related functions. Graduate Texts in Mathematics, 100.\ Springer-Verlag, New York, 1984.
%
\bibitem{bourotte} Bourotte, M.; Allard, D.; Porcu, E., A flexible class of non-separable cross-covariance functions for multivariate space-time data.\ {\em Spat. Stat.} {\bf 18} (2016), Part A, 125-146.
%
\bibitem{cho} Cho,  Yong-Kum; Kim, Dohie; Park, Kyungwon; Yun, Hera, Schoenberg representations and Gramian
	matrices of Mat\'{e}rn functions, arXiv:1702.05894v1 [math.CA] (2017).
%
\bibitem{gneiting} Gneiting, T., Nonseparable, stationary covariance functions for space-time data.\ {\em J. Amer. Stat. Assoc.} {\bf 97} (2002), no. 458, 590-600.
%
\bibitem{grad} Gradshteyn, I. S.; Ryzhik, I. M., Table of integrals, series, and products.\ Translated from the Russian.\  Seventh edition.\ Elsevier/Academic Press, Amsterdam, 2007.
%
\bibitem{horn} Horn, R. A.; Johnson, C. R., Matrix analysis.\ Second edition.\ Cambridge University Press, Cambridge, 2013.
%
\bibitem{kapil} Kapil, Y., Pal, R., Aggarwal, A., Singh,  M.,  Conditionally negative definite functions.\ {\em Mediterr. J. Math.} {\bf 15} (2018), no. 5, Paper No. 199, 12 pp.
%
\bibitem{kleiber} Kleiber W.; Nychka, D., Nonstationary modeling for multivariate spatial processes.\ {\em J. Multivariate Anal.} {\bf 112} (2012), 76-91.
%
\bibitem{mene0} Menegatto, V. A., Positive definite functions on products of metric spaces via generalized Stieltjes functions.\ {\em Proc. Amer. Math. Soc.} {\bf 148} (2020), no. 11, 4781-4795.
%
\bibitem{mene} Menegatto, V. A., Strictly positive definite kernels on the Hilbert sphere.\ {\em Appl. Anal.} {\bf 55} (1994), no. 1-2, 91-101.
%
\bibitem{meneoli} Menegatto, V. A.; Oliveira, C. P., An extension of Aitken's integral for Gaussians and positive definiteness.\ {\em Methods Appl. Anal.} (2021), to appear.
%
\bibitem{micchelli} Micchelli, C. A., Interpolation of scattered data: distance matrices and conditionally positive definite functions, {\em Constr. Approx.} {\bf 2} (1986), 11-22.
%
\bibitem{micheli} Micheli, M.;  Glaun\'{e}s, M. A.,  Matrix-valued kernels for shape deformation analysis.\ {\em Geometry, Imaging, and Computing} {\bf 1} (2014), no. 1, 57-139.
%
\bibitem{minh} Minh, H. Q.; Bazzani, L.; Murino, V., A unifying framework in vector-valued reproducing kernel
	Hilbert spaces for manifold regularization and co-regularized multi-view learning, {\em J. Mach. Learn. Res.} {\bf 17} (2016), 1-72.
%
\bibitem{porcu01} Porcu, E.; Furrer, R.; Nychka, D., 30 years of space-time covariance functions.\  Wiley Interdisciplinary Reviews: Computational Statistics {\bf 13} (2020), no. 3, e1512.
%
\bibitem{reams} Reams, R., Hadamard inverses, square roots and products of almost semidefinite matrices.\ {\em Linear Algebra Appl.} {\bf 288} (1999), no. 1-3, 35-43 .
%
\bibitem{schlather} Schlather, M., Some covariance models based on normal scale mixtures.\ {\em Bernoulli} {\bf 16} (2010), no. 3, 780-797.
%
\bibitem{schilling} Schilling, R. L.; Song, R.; Vondracek, Z., Bernstein functions.\ Theory and applications.\ Second edition.\ De Gruyter Studies in Mathematics, 37.\ Walter de Gruyter $\&$ Co., Berlin, 2012.
%
\bibitem{searle} Searle, S. R., Matrix algebra useful for statistics.\ Wiley Series in Probability and Mathematical Statistics: Applied Probability and Statistics.\ John Wiley $\&$ Sons, Ltd., Chichester, 1982.

\bibitem{wittwar}D. Wittwar, G. Santin, and B. Haasdonk, Interpolation with uncoupled separable matrix-valued kernels.\ Dolomites research notes on approximation.\ Special issue of the ``Seminari Padovani di Analisi Numerica 2018" (SPAN2018), Volume 11,  23-39, 2018.
\end{thebibliography}
\end{document}